\newtheorem{theorem}{Theorem}[section]
\newtheorem{proposition}[theorem]{Proposition}
\newtheorem{problem}{Problem}
\newtheorem{corollary}[theorem]{Corollary}
\theoremstyle{definition}
\newtheorem{definition}[theorem]{Definition}
\theoremstyle{remark}
\theoremstyle{note}
\newcommand{\1}{\mathbf{1}}
\newcommand\remove[1]{}
\newcommand{\vf}{\varphi}
\def\f2{\mathbb{F}_2}
\newcommand{\ep}{\varepsilon}
\newcommand{\lb}{\label}
\newcommand{\buoo}{without loss of generality}
\newcommand{\de}{\delta}
\newcommand{\e}{\varepsilon}
\newcommand{\bbN}{\mathbb{N}}
\newcommand{\Om}{\Omega}
\begin{document}

\title{Bourgain discretization using Lebesgue-Bochner spaces}

\author{Mikhail~I.~Ostrovskii and Beata~Randrianantoanina}

\date{}
\maketitle

\begin{large}

{\sl This paper is dedicated to the memory of our friend Joe
Diestel (1943--2017). Lebesgue-Bochner spaces were one of the main
passions of  Joe. He started to work in this direction in his
Ph.D. thesis \cite{Die68}, and devoted to Lebesgue-Bochner spaces
a large part of his most popular, classical,
Dunford-Schwartz-style monograph \cite{DU77}, joint with Jerry
Uhl.}
\bigskip

\begin{abstract}
We study the Lebesgue-Bochner discretization property of Banach
spaces $Y$, which ensures that the Bourgain's discretization
modulus for $Y$ has a good lower estimate. We  prove that there
exist spaces that do not have the Lebesgue-Bochner discretization
property, and we give a class of examples of spaces that enjoy
this property.
\end{abstract}

{\small \noindent{\bf 2010 Mathematics Subject Classification.}
Primary: 46B85; Secondary: 46B06, 46B07, 46E40.}\smallskip

{\small \noindent{\bf Keywords.} Bourgain discretization theorem,
distortion of an embedding, Lebesgue-Bochner space}


\bigskip

\section{Introduction}

We denote by $c_Y(X)$ the greatest lower bound of distortions of
bilipschitz embeddings of a metric space $(X,d_X)$ into a metric
space $(Y,d_Y)$, that is, the greatest lower bound of the numbers
$C$ for which there is a map $f:X\to Y$ and a real number $r>0$
such that
\[\forall u,v\in X\quad r d_X(u,v)\le d_Y(f(u),f(v))\le r
Cd_X(u,v).\] See \cite{Mat02}, \cite{Nao18}, and \cite{Ost13} for
background on this notion. Let $X$ be a finite-dimensional Banach
space and $Y$ be an infinite-dimensional Banach space.

\begin{definition} For $\ep\in (0,1)$ let $\delta_{X\hookrightarrow Y}(\ep)$ be the supremum of those
$\delta\in (0,1)$ for which every $\delta$-net $\mathcal N_\delta$
in $B_X$ satisfies $c_Y(\mathcal{N}_\delta)\ge (1-\ep)c_Y(X)$.
 The function
$\delta_{X\hookrightarrow Y}(\ep)$ is called the
\emph{discretization modulus for embeddings of $X$ into $Y$}.
\end{definition}

It is not immediate that the discretization modulus is defined for
any $\ep\in(0,1)$, but this can be derived using the argument of
\cite{Rib76} and \cite{HM82} (see \cite[Introduction]{GNS12}).
Giving a new proof of the Ribe theorem \cite{Rib76} Bourgain
proved the following remarkable result \cite{Bou87} (we state it
in a stronger form which  was proved in \cite{GNS12}):

\begin{theorem}[Bourgain's discretization theorem]\label{T:BourgainDiscr} There exists $C\in (0,\infty)$ such that for every two Banach spaces $X,Y$ with $\dim X=n<\infty$ and $\dim Y=\infty$, and every $\ep\in (0,1)$, we have
\begin{equation}\label{E:BourgainDiscrImpr}
\delta_{X\hookrightarrow Y}(\ep)\ge e^{-(c_Y(X)/\ep)^{Cn}}.
\end{equation}
\end{theorem}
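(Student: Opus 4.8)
The plan is to follow the sharp form of Bourgain's argument due to Giladi, Naor and Schechtman. Since $\delta_{X\hookrightarrow Y}(\ep)$ is already known to be a well-defined positive number for every $\ep\in(0,1)$ (via the argument of Ribe and of Heinrich--Mankiewicz referred to above), the content of the theorem is purely quantitative, and it suffices to prove the following: if some $\delta$-net $\mathcal N_\delta$ in $B_X$ admits a map $f:\mathcal N_\delta\to Y$ with $\|u-v\|_X\le\|f(u)-f(v)\|_Y\le D\|u-v\|_X$ for all $u,v\in\mathcal N_\delta$ and with $D<(1-\ep)c_Y(X)$, then $\delta\ge e^{-(c_Y(X)/\ep)^{Cn}}$ for an absolute constant $C$. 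Granting this, every $\delta$-net with $\delta$ below that threshold satisfies $c_Y(\mathcal N_\delta)\ge(1-\ep)c_Y(X)$, which is exactly the claimed bound on the modulus. The mechanism will be to manufacture out of $f$ a \emph{linear} embedding $T:X\to Y$ of distortion strictly below $c_Y(X)$, contradicting the definition of $c_Y(X)$.

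The first step is to extend and then smooth. Identify $X$ with $\R^n$ as a set, which by John's theorem distorts distances by at most $\sqrt n$ (the first appearance of the dimension), and extend $f$ — by a partition of unity subordinate to $\mathcal N_\delta$, post-composed with the $2$-Lipschitz radial retraction of $\R^n$ onto $B_X$ — to a bounded, $O(D)$-Lipschitz map $\hat f:\R^n\to Y$ satisfying $\|\hat f(x)-\hat f(y)\|_Y\ge\|x-y\|_X-O(D\delta)$ for $x,y\in B_X$. Now put $F_t=\hat f*P_t$, where $P_t$ is the $n$-dimensional Poisson kernel. As a slice of the harmonic extension of $\hat f$ to the upper half-space, $F_t$ is real-analytic in $x$, hence $C^\infty$, so its Fr\'echet derivative $DF_t(x):X\to Y$ exists at \emph{every} point; this is the device that lets one differentiate a map into an \emph{arbitrary} Banach space $Y$, with no Radon--Nikod\'ym or Rademacher theorem at one's disposal. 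From the boundedness and the Lipschitz bound on $\hat f$ one obtains the standard estimates $\|F_t-\hat f\|_\infty=O(Dt\log(1/t))$ on a fixed ball and $\|DF_t(x)-DF_t(y)\|=O(D\|x-y\|_X/t)$, so that for $\delta\ll t\ll 1$ the map $F_t$ remains $O(D)$-Lipschitz and is still lower-Lipschitz at unit scale, $\|F_t(x)-F_t(y)\|_Y\ge\tfrac12\|x-y\|_X$ whenever $x,y\in B_X$ and $\|x-y\|_X$ is not too small. Finally, the semigroup identity $P_s*P_t=P_{s+t}$ passes to $F_{s+t}=F_s*P_t$ and, upon differentiating, to $DF_{s+t}(x)=(DF_s*P_t)(x)$: for $t>s$ the operator $DF_{s+t}(x)$ is a genuine probabilistic average of the operators $DF_s(z)$, and it is this convexity that powers the next step.

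The heart of the proof is a multiscale pigeonhole run over the $N\asymp\log(1/\delta)$ dyadic scales between $\delta$ and $1$. One introduces a scalar ``variation budget'' — essentially the sum over these scales of the average over $B_X$ of $\|DF_{2t}(x)-DF_t(x)\|$ — and bounds it using the $O(D)$-Lipschitz control together with the within-scale smoothness recalled above; a pigeonhole then produces a scale $t_0$ at which $DF_{t_0}$ varies little, both across to $2t_0$ and spatially over an $O(t_0)$-ball about some point $x_0$. Rescaling so that $t_0$ becomes the unit scale — $G(y):=t_0^{-1}\big(F_{t_0}(x_0+t_0y)-F_{t_0}(x_0)\big)$ — gives an $O(D)$-Lipschitz map $G$ on $B_X$ that preserves distances from below up to $O(\delta/t_0)$ and whose derivative is nearly constant on $B_X$; hence $G$ lies within $O(\ep)$ of its linearization $y\mapsto Ty$, where $T:=DG(0)=DF_{t_0}(x_0)$. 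Consequently $T$ is a linear embedding of $X$ into $Y$ of distortion at most $(1+O(\ep))D<(1+O(\ep))(1-\ep)c_Y(X)$, which — with the implied constants arranged — is $<c_Y(X)$, the sought contradiction. Tracking how the $O(\ep)$ error is apportioned among the scales, and accounting for the $\sqrt n$ from John's theorem, shows the contradiction is reached as soon as $\delta<e^{-(c_Y(X)/\ep)^{Cn}}$.

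The step I expect to be the genuine obstacle is the control of the multiscale variation budget when the target $Y$ is an \emph{arbitrary} Banach space. In a Hilbertian, or even a finite-cotype, target the relevant square-function/Littlewood--Paley inequality is classical and essentially costless; with no geometric hypothesis on $Y$ only rather weak estimates survive, and it is exactly this weakness — together with the division of the $O(\ep)$ error among $\asymp\log(1/\delta)$ scales and the loss from John's theorem — that produces the doubly-exponential form $e^{-(c_Y(X)/\ep)^{Cn}}$ and puts the dimension $n$ into the exponent. The remaining pieces — the Lipschitz extension, the standard bounds for the Poisson extension and its derivatives, and the passage from ``$DG$ nearly constant'' to ``$G$ nearly linear'' — are routine, provided one is willing to bury polynomial-in-$n$ losses in the constant $C$.
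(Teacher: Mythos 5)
The paper does not actually prove Theorem~\ref{T:BourgainDiscr}; it quotes it from \cite{Bou87} in the stronger form established in \cite{GNS12}. Your outline does follow that cited argument in its broad strokes: Lipschitz extension of the net map, Poisson smoothing $F_t=\hat f*P_t$ so that derivatives exist everywhere with no RNP assumption on $Y$, the semigroup identity $DF_{s+t}=(DF_s)*P_t$, a multiscale selection of a point and scale, and the derivative there as the linear embedding contradicting $c_Y(X)$. So the skeleton is right.

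However, there is a genuine gap exactly at the step you yourself flag as ``the genuine obstacle,'' and the mechanism you propose there does not work. The ``variation budget'' $\sum_{\mathrm{scales}}\int_{B_X}\|DF_{2t}(x)-DF_t(x)\|\,dx$ cannot be bounded for an arbitrary target $Y$: telescoping controls only the norm of the \emph{sum} of the increments (which is $O(D)$), not the sum of their norms, and without cotype, UMD, or some Littlewood--Paley/square-function substitute there is simply no estimate to pigeonhole on --- this is precisely why no geometric hypothesis on $Y$ makes the problem hard. Two further signs that this route is wrong: (i) even granting a good scale, the within-scale smoothness bound $\|DF_t(x)-DF_t(y)\|=O(D\|x-y\|_X/t)$ gives near-constancy of the derivative only on balls of radius $\ll \ep t/D$, not on an $O(t_0)$-ball, so the rescaled map $G$ on $B_X$ need not be close to its linearization; and (ii) if the budget-plus-pigeonhole scheme did work, dividing an $O(\ep)$ error among $\asymp\log(1/\delta)$ dyadic scales would force only $\delta\lesssim e^{-c/\ep}$ (up to polynomial factors in $n$), a bound far stronger than the theorem and not what the known proof yields. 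The argument of \cite{Bou87,GNS12} never makes $DF_{t_0}$ nearly constant: the upper bound $\|DF_t(x)\|\lesssim D$ is automatic from the Lipschitz property, and the whole work goes into a \emph{directional lower bound} $\|DF_t(x)a\|\gtrsim \|a\|_X/D$ at one well-chosen point and scale, obtained by an iteration in which failures at distinct scales are composed through the averaging identity $DF_{s+t}=(DF_s)*P_t$; it is this iteration over on the order of $(D/\ep)^{Cn}$ scales that produces the threshold $e^{-(c_Y(X)/\ep)^{Cn}}$. As written, your proposal names the crucial difficulty but does not resolve it, so it is an outline of the known proof rather than a proof.
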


Bourgain's discretization theorem and the described below result
of \cite{GNS12} on improved estimates in the case of $L_p$ spaces
have important consequences for quantitative estimates of
$L_1$-distortion of the metric space consisting of finite subsets
(of equal cardinality) in the plane with the minimum weight
matching distance, see \cite[Theorem 1.2]{NS07}.\medskip

The proof of Bourgain's discretization theorem was clarified and
simplified in \cite{Beg99} and \cite{GNS12} (see also its
presentation in \cite[Section 9.2]{Ost13}). Different approaches
to proving Bourgain's discretization theorem in special cases were
found in \cite{LN13}, \cite{HLN16}, and \cite{HN16+}. However
these approaches do not improve the order of estimates for the
discretization modulus.
\medskip

On the other hand the paper \cite{GNS12} contains a proof with
much better estimates in the case where $Y=L_p$. The approach of
\cite{GNS12} is based on the following result (whose proof uses
   methods of \cite{JMS09}; origins of this approach can be found in \cite{GK03}).

\begin{theorem}[{\cite[Theorem 1.3]{GNS12}}]\label{T:LBdiscr}
There exists a universal constant $\kappa\in (0,\infty)$ with the
following property. Assume that $\delta,\ep\in (0,1)$ and $D\in
[1,\infty)$ satisfy $\delta\le \kappa\ep^2/(n^2D)$. Let $X,Y$ be
Banach spaces with $\dim X=n<\infty$, and let $\mathcal N_\delta$
be a $\delta$-net in $B_X$. Assume that $c_Y(\mathcal N_\delta)\le
D$. Then there exists a separable probability space
$(\Omega,\nu)$, a finite dimensional linear subspace $Z\subseteq
Y$, and a linear operator $T:X\to L_\infty(\nu,Z)$ satisfying
\begin{equation}\label{E:LBfactor}
\forall x\in X,\quad \frac{1-\ep}{D}\|x\|_X\le
\|Tx\|_{L_1(\nu,Z)}\le  \|Tx\|_{L_\infty(\nu,Z)}\le
(1+\ep)\|x\|_X.
\end{equation}
\end{theorem}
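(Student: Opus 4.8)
The plan is to run a quantitative differentiation argument in the spirit of \cite{JMS09} (whose origins, as the excerpt notes, lie in \cite{GK03}): convert the bilipschitz embedding of the net $\mathcal N_\delta$ into a Lipschitz map on a Euclidean ball, mollify it, differentiate the mollification at a random base point, and record all of these differentials as one measurable family, i.e.\ as a single linear operator into a Lebesgue--Bochner space. The payoff of this formulation is that, although $Y$ is assumed to have no Radon--Nikodym-type property and so no individual differential need exist, the whole family is a perfectly legitimate object and already exhibits the sandwiched $L_1\le L_\infty$ behaviour required in \eqref{E:LBfactor}. To begin I rescale the embedding $f:\mathcal N_\delta\to Y$ realizing $c_Y(\mathcal N_\delta)\le D$ so that $\tfrac1D\|u-v\|_X\le\|f(u)-f(v)\|_Y\le\|u-v\|_X$ for $u,v\in\mathcal N_\delta$, identify $X$ with $(\mathbb R^n,\|\cdot\|_X)$, fix an isometric embedding $Y\hookrightarrow C(K)$ with $K=(B_{Y^*},\mathrm{weak}^*)$, and extend $f$ coordinatewise by the McShane--Whitney formula to a $1$-Lipschitz map $\hat F:\mathbb R^n\to C(K)$ with $\hat F|_{\mathcal N_\delta}=f$. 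Extending into the overspace $C(K)$ instead of into $Y$ is the decisive first move: it preserves the Lipschitz constant exactly, whereas a Lipschitz extension into $Y$, or a partition-of-unity extension, would lose a factor depending on $n$ and would ultimately force an exponential — not polynomial — bound on $\delta$.

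Next I mollify: let $\psi$ be a smooth probability density supported in the unit ball of $\|\cdot\|_X$, put $\psi_t(w)=t^{-n}\psi(w/t)$ and $\hat F_t=\hat F*\psi_t$. Because mollification does not increase the Lipschitz constant, $\|d\hat F_t(y)\|_{X\to C(K)}\le 1$ for all $y$ and all $t>0$. The crux is then to show that, for $y$ in the region carrying the net, $d\hat F_t(y)$ is close in operator norm to a linear map valued in the fixed finite-dimensional subspace $Z:=\mathrm{span}\,f(\mathcal N_\delta)\subseteq Y$. This comes from writing $d\hat F_t(y)[h]=\int\bigl(\hat F(y-w)-\hat F(y)\bigr)\langle\nabla\psi_t(w),h\rangle\,dw$ (using $\int\nabla\psi_t=0$) and splitting $\hat F(y-w)-\hat F(y)=\bigl[f(u_{y-w})-f(u_y)\bigr]+\rho(w)$, where $u_z$ is a nearest net point, the bracketed term lies in $Z$, and $\|\rho(w)\|_{C(K)}\le 3\delta$ because $\hat F$ is $1$-Lipschitz and agrees with $f$ on the $\delta$-net. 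The resulting linear map $\chi_y:X\to Z$ satisfies $\|\chi_y-d\hat F_t(y)\|_{X\to C(K)}\lesssim\delta\Gamma_n/t$ with $\Gamma_n=\int\|\nabla\psi\|_{X^*}$ polynomial in $n$, hence $\|\chi_y\|_{X\to Y}\le 1+O(\delta\Gamma_n/t)$, while the co-Lipschitz estimate survives in the form $\|\hat F_t(y+x)-\hat F_t(y)\|_{C(K)}\ge\tfrac1D\|x\|-O(\delta+t)$. Now $t$ is chosen so that $O(\delta\Gamma_n/t)$ and the smoothing error $O(t)$ are simultaneously small compared with $\varepsilon/D$; such a $t$ exists precisely because $\delta\le\kappa\varepsilon^2/(n^2D)$, and optimizing this trade-off is what pins down the exact shape of the hypothesis.

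With $t$ fixed, I let $\Omega$ be (a dilate of) the ball carrying the net, equipped with a smooth, nearly translation-invariant probability measure $\nu$ concentrated well inside that ball — a truncated Gaussian suffices — and I set $(Tx)(y):=\chi_y(x)$. Then $T$ is linear, takes values in $L_\infty(\nu,Z)$, and $\|Tx\|_{L_\infty(\nu,Z)}=\operatorname{ess\,sup}_y\|\chi_y x\|_Y\le(1+\varepsilon)\|x\|_X$. For the lower bound I use the fundamental theorem of calculus, $\hat F_t(y+x)-\hat F_t(y)=\int_0^1 d\hat F_t(y+sx)[x]\,ds$, which gives $\int_0^1\|d\hat F_t(y+sx)[x]\|\,ds\ge\tfrac1D\|x\|-O(\delta+t)$ for each $y$; integrating against $\nu$, interchanging integrals, using near translation-invariance of $\nu$ to replace $y+sx$ by $y$, and passing from $d\hat F_t(y)$ to $\chi_y$ at cost $O(\delta\Gamma_n/t)\|x\|$, I obtain $\|Tx\|_{L_1(\nu,Z)}=\int_\Omega\|\chi_y x\|_Y\,d\nu\ge\tfrac{1-\varepsilon}{D}\|x\|_X$. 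The inequality $\|Tx\|_{L_1}\le\|Tx\|_{L_\infty}$ is automatic since $\nu$ is a probability measure, and by homogeneity it is enough to verify the three estimates for one fixed value of $\|x\|_X$. Finally $Z$ is finite-dimensional by construction and $(\Omega,\nu)$ is separable, so \eqref{E:LBfactor} holds.

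The main obstacle is this middle step — extracting a genuinely linear operator with the sharp constants $1+\varepsilon$ and $(1-\varepsilon)/D$ from merely bilipschitz data on a finite net. The two ingredients that make it work, namely extending $1$-Lipschitzly into $C(K)$ so that operator norms of differentials do not blow up, and then proving that those differentials are approximately valued in the fixed finite-dimensional $Z\subseteq Y$, pull in opposite directions in the mollification scale $t$: large $t$ is needed for the smoothing to wash out the errors living at scale $\delta$, small $t$ is needed to keep the smoothing error itself negligible, and the requirement that an admissible common $t$ exist is exactly what forces $\delta\le\kappa\varepsilon^2/(n^2D)$. A secondary and more routine difficulty is arranging the base-point measure $\nu$ to be translation-stable enough that the Fubini step in the lower bound costs only $o(\varepsilon/D)$.
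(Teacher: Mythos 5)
Your outline is the right circle of ideas for this statement (which the paper itself does not prove but quotes from \cite{GNS12}, whose argument follows \cite{JMS09}): extend the net embedding to a Lipschitz map, smooth, differentiate, record the differentials as one operator into $L_\infty(\nu,Z)$, and get the $L_1$ lower bound from the fundamental theorem of calculus plus a translation argument for $\nu$. The genuine gap is quantitative, and it sits exactly in the sentence ``such a $t$ exists precisely because $\delta\le\kappa\varepsilon^2/(n^2D)$'', which you assert but never check. Do the bookkeeping for your scheme. The main term in the $L_1$ lower bound is $\|x\|/D$, so every additive error must be at most a constant times $\varepsilon\|x\|/D$. The near-translation-invariance of $\nu$ forces $\|x\|\lesssim \varepsilon/\mathrm{poly}(n)$ (for any probability density essentially carried by $B_X$, the worst-direction total-variation cost of a shift by $x$ is at least of order $\sqrt{n}\,\|x\|$, and that cost must be $\le \varepsilon/D\le\varepsilon$); the smoothing error forces $t\lesssim \varepsilon\|x\|/D$; and replacing $d\hat F_t(y)$ by the $Z$-valued $\chi_y$ costs $(\delta\Gamma_n/t)\|x\|$, i.e.\ forces $\delta\lesssim \varepsilon t/(D\Gamma_n)$. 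Chaining these three constraints gives a requirement of the shape $\delta\lesssim \varepsilon^3/\bigl(\mathrm{poly}(n)\,D^2\bigr)$, which is \emph{not} implied by the stated hypothesis $\delta\le\kappa\varepsilon^2/(n^2D)$ (for no choice of $\kappa$, since $D\ge1>\varepsilon$). So, as written, your argument proves only a weaker theorem with a smaller admissible $\delta$; it does not prove the statement as quoted, and consequently would also degrade the corollary's modulus bound. The structural culprit is the pairing of an \emph{exact} $1$-Lipschitz extension into the overspace $C(K)$, whose differentials are only approximately $Z$-valued (error $\delta\Gamma_n/t$), with mollification (error $t$): the two requirements on $t$ multiply and cost an extra factor of order $\varepsilon/D$ beyond what the theorem allows.

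Relatedly, your justification for choosing the $C(K)$ extension is backwards. A partition-of-unity almost-extension in the style of Begun \cite{Beg99} does not lose a multiplicative factor of $n$: built at scale comparable to $\delta$, it takes values in the convex hull of $f(\mathcal N_\delta)$, hence in $Z$ itself, and is $\bigl(1+O(n\delta/\theta)\bigr)$-Lipschitz on $(1-\theta)B_X$ while agreeing with $f$ up to $O(\delta)$ on the net. Since $Z$ is finite dimensional (so $Z$-valued Lipschitz maps are differentiable almost everywhere), one can then differentiate with no smoothing parameter at all, eliminating the $t$ versus $\delta/t$ tension entirely; alternatively the $w^*$-differentiability route of \cite{JMS09} serves the same purpose. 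Some device of this kind, making the differentials genuinely (not approximately) $Z$-valued, is what permits the hypothesis $\delta\le\kappa\varepsilon^2/(n^2D)$ in \eqref{E:LBfactor}; to repair your proof you should replace the $C(K)$ extension plus approximate localization by such an argument, or else weaken the statement you claim.
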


As is noted in \cite{GNS12}, since $(\Omega,\nu)$ is a probability
measure, we have
\[\|\cdot\|_{L_1(\nu,Z)}\le \|\cdot\|_{L_p(\nu,Z)}\le \|\cdot\|_{L_\infty(\nu,Z)}
\]
for every $p\in[1,\infty]$, therefore \eqref{E:LBfactor} implies
that $X$ admits an embedding into ${L_p(\nu,Z)}$ with distortion
$\le\displaystyle{\frac{D(1+\ep)}{1-\ep}}$. Since, by the
well-known Carath\'eodory theorem, $L_p(\nu,L_p)$ is isometric to
$L_p$ (see \cite[\S 14]{Lac74}) we get that if $Z$ is a subspace
of $L_p$, then $L_p(\nu,Z)$ is also a subspace of $L_p$, and, as
explained in \cite{GNS12}, it follows that the Bourgain's
discretization modulus for the case of $Y=L_p$ satisfies a much
better estimate
$$\de_{X\hookrightarrow L_p}(\ep)\ge \frac{\kappa \e^2}{n^{5/2}}$$
(since for all spaces $X, Y$ and all $\de>0$, $c_Y(\mathcal N_\delta)\le\sqrt{n}$, see \cite{GNS12}).
\medskip

To generalize this approach to a wider class of spaces it is
natural to introduce the following definition.

\begin{definition}\label{D:LBDiscr} We say that a Banach space $Y$ has {\it the
Lebesgue-Bochner discretization property} if for any separable
probability measure $\mu$, there exists a function
$f:[1,\infty)\to [1,\infty)$ so that for any $C\ge 1$ and any
finite dimensional subspace $Z\subset Y$, if $W$ is any
finite-dimensional subspace of $L_\infty(\mu,Z)$ such that for all
$w\in W$ \begin{equation}\label{E:LBdp}\|w\|_{L_\infty(\mu,Z)} \le
C\|w\|_{L_1(\mu,Z)},\end{equation} then $W$ is $f(C)$-embeddable
into $Y$.
\end{definition}

The following is a corollary of Theorem~\ref{T:LBdiscr}.

\begin{corollary} Let $Y$ be a Banach space with the
Lebesgue-Bochner discretization property, $\delta\le
\kappa\ep^2/(n^{5/2})$, where $\kappa$ is the constant of
Theorem~\ref{T:LBdiscr}, and $\mathcal{N}_\de$ be a $\delta$-net
in an $n$-dimensional Banach space $X$. Then
\begin{equation}\label{gdisc}
c_Y(X)\le g\left(\frac{1+\ep}{1-\ep}c_Y(\mathcal{N}_\de)\right),
\end{equation}
where $g(t):=tf(t)$ and $f$ is the function of
Definition~\ref{D:LBDiscr}. Thus if, for an increasing function
$g$, we define $\delta^g_{X\hookrightarrow Y}(\ep)$ as the
supremum of $\de$ so that \eqref{gdisc} is satisfied for all
$\de$-nets $\mathcal{N}_\de$ of $B_X$, we have that
$\delta^g_{X\hookrightarrow Y}(\ep)\ge \kappa\ep^2/(n^{5/2})$.
\end{corollary}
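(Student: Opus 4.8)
The plan is to run, for a general $Y$ with the Lebesgue-Bochner discretization property, the scheme that \cite{GNS12} used in the case $Y=L_p$: factor $X$ through a Lebesgue-Bochner space via Theorem~\ref{T:LBdiscr}, then leave that space using the Lebesgue-Bochner discretization property, and finally multiply the two distortions. First I would put $D:=c_Y(\mathcal{N}_\de)$ and verify the hypotheses of Theorem~\ref{T:LBdiscr}: one has $D\ge 1$ (a distortion is always at least $1$) and $D\le\sqrt n$ by the bound $c_Y(\mathcal{N}_\de)\le\sqrt n$ recalled above, so $n^2D\le n^{5/2}$ and hence $\de\le\kappa\ep^2/n^{5/2}\le\kappa\ep^2/(n^2D)$; and of course $c_Y(\mathcal{N}_\de)\le D$. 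Theorem~\ref{T:LBdiscr} then yields a separable probability space $(\Omega,\nu)$, a finite-dimensional subspace $Z\subseteq Y$, and a linear map $T\colon X\to L_\infty(\nu,Z)$ satisfying \eqref{E:LBfactor}.

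Next I would examine the image $W:=T(X)$. The left-hand inequality in \eqref{E:LBfactor} makes $T$ injective, so $W$ is an $n$-dimensional subspace of $L_\infty(\nu,Z)$, and for $w=Tx\in W$ the estimates \eqref{E:LBfactor} give $\|w\|_{L_\infty(\nu,Z)}\le(1+\ep)\|x\|_X$ together with $\|x\|_X\le\frac{D}{1-\ep}\|w\|_{L_1(\nu,Z)}$, whence
\[\|w\|_{L_\infty(\nu,Z)}\le C\,\|w\|_{L_1(\nu,Z)},\qquad C:=\frac{(1+\ep)D}{1-\ep}=\frac{1+\ep}{1-\ep}\,c_Y(\mathcal{N}_\de)\ \ (\ge 1),\]
which is exactly condition \eqref{E:LBdp} for the measure $\nu$. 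Thus $W$ is a subspace of $L_\infty(\nu,Z)$ of the kind described in Definition~\ref{D:LBDiscr}, so that property provides a bilipschitz embedding $S\colon W\to Y$ of distortion at most $f(C)$.

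Finally I would compose $S$ with $T$. Regarded as a map into $L_\infty(\nu,Z)$ with its own norm, $T$ has distortion at most $C$, because $\|Tx\|_{L_\infty(\nu,Z)}\ge\|Tx\|_{L_1(\nu,Z)}\ge\frac{1-\ep}{D}\|x\|_X$ while $\|Tx\|_{L_\infty(\nu,Z)}\le(1+\ep)\|x\|_X$; hence $S\circ T\colon X\to Y$ has distortion at most $C\cdot f(C)=g(C)$, i.e.\ $c_Y(X)\le g\!\left(\frac{1+\ep}{1-\ep}c_Y(\mathcal{N}_\de)\right)$, which is \eqref{gdisc}. Since this holds for \emph{every} $\de$-net of $B_X$ with $\de\le\kappa\ep^2/n^{5/2}$, the definition of $\delta^g_{X\hookrightarrow Y}(\ep)$ gives at once $\delta^g_{X\hookrightarrow Y}(\ep)\ge\kappa\ep^2/n^{5/2}$.

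The only non-routine point is the role of the probability space: Definition~\ref{D:LBDiscr} attaches a function $f$ to each separable measure $\mu$, whereas the $\nu$ produced by Theorem~\ref{T:LBdiscr} may vary with the net, so a priori $g=g_\nu$ depends on $\mathcal{N}_\de$ and $\delta^g_{X\hookrightarrow Y}$ is not well defined. I would handle this by reading the Lebesgue-Bochner discretization property in its uniform form — one $f$ valid for all separable $\mu$ — which is also the reading that makes the phrase ``$f$ is the function of Definition~\ref{D:LBDiscr}'' literally meaningful, and then checking that this uniform version is indeed at hand. I expect this bookkeeping, not the embedding itself, to be the point that actually requires a word of justification.
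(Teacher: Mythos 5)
Your proposal is correct and follows essentially the same route as the paper's own proof: apply Theorem~\ref{T:LBdiscr} to the net, take $W=T(X)$, note that it satisfies \eqref{E:LBdp} with $C=\frac{1+\ep}{1-\ep}c_Y(\mathcal{N}_\de)$, embed $W$ into $Y$ with distortion $f(C)$ via the Lebesgue-Bochner discretization property, and compose to get $c_Y(X)\le Cf(C)=g(C)$. You merely spell out details the paper leaves implicit (the check $\de\le\kappa\ep^2/(n^2D)$ via $c_Y(\mathcal{N}_\de)\le\sqrt n$, and the uniformity of $f$ over separable measures, which the authors handle by reducing to $[0,1]$ via the Carath\'eodory theorem), so no further work is needed.
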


\begin{proof}
By Theorem~\ref{T:LBdiscr},  there exists a finite dimensional
subspace $Z\subset Y$ and a finite-dimensional subspace $W \subset
L_\infty(\nu,Z)$ (the image of the operator $T$) so that $W$
satisfies \eqref{E:LBdp} with
$C=\frac{1+\ep}{1-\ep}c_Y(\mathcal{N}_\de)$. Thus by the
Lebesgue-Bochner discretization property of $Y$,  $c_Y(W)\le
f(\frac{1+\ep}{1-\ep}c_Y(\mathcal{N}_\de))$, and we obtain
\[c_Y(X)\le\frac{1+\ep}{1-\ep}\ c_Y(\mathcal{N}_\de)f\left(\frac{1+\ep}{1-\ep}c_Y(\mathcal{N}_\de)\right).\qedhere\]
\end{proof}

\begin{problem}\label{P:LBdiscr} Characterize Banach spaces
with the
Lebesgue-Bochner discretization property.
\end{problem}

At the meeting of the Simons Foundation (New York City, February 20,
2015) Assaf Naor mentioned that at that time no examples of Banach
spaces which do not have the Lebesgue-Bochner discretization property were
known although people who were working on this (Assaf Naor and
Gideon Schechtman) believed that such examples should exist.
\medskip

We note that the based on the Fubini and Carath\'eodory theorems
argument showing that $L_p(L_p)$ is isometric to $L_p$ (for 
suitable measure spaces) fails for other functions spaces even in
a certain `isomorphic' form (see \cite[Appendix]{BBS02}).  For some
spaces a very strong opposite of the situation in the $L_p$-case
happens: Raynaud \cite{Ray89} proved that when $L_\vf([0,1],\mu)$
is an Orlicz space that is not isomorphic to some $L_p$ and does
not contain $c_0$ or $\ell_1$, then, for any $r\in[1,\infty)$  the
space $\ell_r(L_\vf)$ (and thus also $L_\vf([0,1],\mu,L_\vf)$) not
only does not embed in $L_\vf([0,1],\mu)$, but is not even crudely
finitely representable in it.

In general, if   $E$  is a Banach function space on a measure
space $(\Om,\mu)$, the structure of  the $E$-valued Bochner space
$E(\Om,\mu,E)$ can be very different from the structure of the
space $E$, see \cite{Rea90}, \cite{BBS02}, \cite{FPP08}. We refer
the reader to \cite{BBS02} for a detailed discussion and history
of related results.

In
this paper we show (Proposition \ref{P:NonSQNonLB}) that there is
a class of Banach spaces which do not have the Lebesgue-Bochner
discretization property and observe that this class contains the space
constructed by Figiel \cite{Fig72}.

 We also find some  examples, besides $L_p$,  of
Banach spaces that have the Lebesgue-Bochner discretization
property. An easy observation is that the Lebesgue-Bochner spaces
$L_p(E)$, where $E$ is any Banach space, have the Lebesgue-Bochner
discretization property. It is interesting that even  the finite
direct sums of such spaces   have the Lebesgue-Bochner
discretization property, see Proposition~\ref{propLpk}. We would
like to mention that many well-known and important spaces are of
the form $L_p(E)$. In particular,  the mixed norm Lebesgue spaces
$L^P$ introduced in \cite{BP61}   are such and thus have the
Lebesgue-Bochner discretization property. For
$P=(p_1,\dots,p_m)\in[1,\infty)^m$, the space $L^P$ consists of
measurable functions $f$ on $\Om=\prod_{i=1}^m (\Om_j,\mu_j)$, the
norm defined by
$$\|f\|_P:=
\left(\int\dots\left(\int \left(\int
|f(t_1,\dots,t_n)|^{p_1}d\mu_1\right)^{p_2/p_1}d\mu_2\right)^{p_3/p_2}\dots
d\mu_m\right)^{1/p_m}.$$ Mixed norm spaces of this type arise
naturally in harmonic and functional analysis. Such norms (and
their generalizations that use other function space norms in place
of the $L_{p_j}$-norms) are used for example to study Fourier and
Sobolev inequalities and embeddings of Sobolev spaces. The
properties and applications of mixed norm spaces are extensively
studied in the literature, see e.g. \cite{GS16,CS16,DPS10} and
their references.

\section{Finitely squarable Banach spaces}

\begin{definition}
An infinite-dimensional Banach space $Y$ is called {\it finitely
squarable} if there exists a constant $C$ such that
 for every
finite-dimensional subspace $Z\subset Y$ the direct sum
$Z\oplus_\infty Z$ admits a linear embedding into $Y$ with
distortion bounded by $C$.
\end{definition}

The first examples of Banach spaces which are not finitely
squarable were constructed by Figiel \cite{Fig72}. An easy
observation is that a Banach space $Y$ which is isomorphic to
$Y\oplus Y$, is finitely squarable. The converse it false. In
fact, both of the earliest examples of Banach spaces which are not
isomorphic to their squares, the James \cite{Jam50} quasireflexive
space $J$ \cite{BP60} and $c(\omega_1)$ \cite{Sem60} are finitely
squarable, and for very simple reason: they have trivial cotype.
For the James space this was proved in \cite{GJ73}, for
$c(\omega_1)$ this is obvious. Modern Banach space theory provides
much more sophisticated examples of finitely squarable spaces
which are not isomorphic to their squares, for example, the
Argyros-Haydon space \cite{AH11}.

\begin{proposition}\label{P:NonSQNonLB} Any space which is not finitely squarable does not
have the Lebes\-gue-Bochner discretization property.
\end{proposition}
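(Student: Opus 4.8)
The plan is to prove the contrapositive: if $Y$ has the Lebesgue-Bochner discretization property, then $Y$ is finitely squarable. The engine is a simple observation: for one fixed, convenient choice of $\mu$, every direct sum $Z\oplus_\infty Z$ with $Z$ a finite-dimensional subspace of $Y$ sits inside $L_\infty(\mu,Z)$ as a finite-dimensional subspace on which the ratio $\|\cdot\|_{L_\infty(\mu,Z)}/\|\cdot\|_{L_1(\mu,Z)}$ is controlled by an \emph{absolute} constant; feeding such a subspace into Definition~\ref{D:LBDiscr} then re-embeds all the spaces $Z\oplus_\infty Z$ back into $Y$ with a single distortion bound, which is exactly finite squarability.

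Concretely, take $\mu$ to be the probability measure on the two-point set $\{1,2\}$ giving mass $\tfrac12$ to each point (any separable probability space carrying a measurable set of measure strictly between $0$ and $1$ works equally well, for instance $[0,1]$ with Lebesgue measure cut into two halves; only the constant changes). Let $f:[1,\infty)\to[1,\infty)$ be the function attached to this $\mu$ by the Lebesgue-Bochner discretization property. For an arbitrary finite-dimensional subspace $Z\subseteq Y$, put $W:=L_\infty(\mu,Z)$. Then $W$ is finite-dimensional; under the identification $w\leftrightarrow(w(1),w(2))$ it is isometric to $Z\oplus_\infty Z$, and $\|w\|_{L_1(\mu,Z)}=\tfrac12\,(\|w(1)\|_Z+\|w(2)\|_Z)$. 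Hence for every $w\in W$,
\[
\|w\|_{L_\infty(\mu,Z)}=\max\{\|w(1)\|_Z,\|w(2)\|_Z\}\le\|w(1)\|_Z+\|w(2)\|_Z=2\,\|w\|_{L_1(\mu,Z)},
\]
so $W$ satisfies \eqref{E:LBdp} with $C=2$, and this value of $C$ is the same for every $Z$.

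Now the Lebesgue-Bochner discretization property (applied with this $\mu$, this $C=2$, and this $W$) says that $W$ — hence $Z\oplus_\infty Z$, to which $W$ is isometric — embeds into $Y$ with distortion at most $f(2)$, a bound independent of $Z$. Reading the embedding furnished by Definition~\ref{D:LBDiscr} in the linear sense (consistent with the linear operator produced in Theorem~\ref{T:LBdiscr} and with the linear embeddings appearing in the definition of finite squarability), we conclude that $Z\oplus_\infty Z$ linearly embeds into $Y$ with distortion at most $f(2)$ for every finite-dimensional $Z\subseteq Y$; that is, $Y$ is finitely squarable with constant $f(2)$. This establishes the contrapositive, and hence the proposition.

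I do not anticipate a genuine obstacle: the argument is soft, and once the two-atom measure is chosen it reduces to the one-line norm comparison above, plus keeping track of which constants depend on $Z$ and which do not. The only point that needs to be pinned down is the mode of embedding in Definition~\ref{D:LBDiscr}: it must be the linear one, since finite squarability is defined via linear embeddings. If one instead had only a bi-Lipschitz embedding of each $Z\oplus_\infty Z$ into $Y$ with uniform distortion, an additional argument would be needed to pass to linear embeddings with a distortion bound independent of $\dim Z$ — precisely the type of passage whose quantitative form is delicate — so making this convention explicit is the one thing to be careful about.
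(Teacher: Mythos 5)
Your proof is correct and follows essentially the same route as the paper: the paper also realizes $Z\oplus_\infty Z$ isometrically as the subspace of $L_\infty([0,1],Z)$ of functions constant on each half of $[0,1]$, notes that the $L_1$ and $L_\infty$ norms are $2$-equivalent there, and concludes via Definition~\ref{D:LBDiscr}. Your only cosmetic differences are phrasing the argument as an explicit contrapositive and using the two-atom probability space instead of $[0,1]$ split in halves, which yield the same subspace and the same constant $C=2$.
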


\begin{proof} Let $Z$ be a subspace of $Y$ for which $Z\oplus_\infty Z$ is ``very far'' from a
subspace of $Y$.

We introduce the following subspace $W\subset L_\infty ([0,1],Z)$:
it consists of all $Z$-valued functions which are constant on the
first half and constant on the second half, but these constants
can be different vectors of $Z$. It is clear that this space is
isometric to $Z\oplus_\infty Z$. It is also clear that the
$L_1([0,1],Z)$ norm on this subspace is $2$-equivalent to the
$L_\infty$-norm. The conclusion follows.
\end{proof}

This proposition makes the following problem important:

\begin{problem}\label{P:FinNonSq} Does there exist a finitely squarable space which
does not have the Lebesgue-Bochner discretization property?
\end{problem}

We conjecture that the answer to Problem \ref{P:FinNonSq} is
positive.

\section{Examples of spaces with the Lebesgue-Bochner discretization property}

In this section we provide some examples of spaces having the
Lebesgue-Bochner discretization property. In all proofs below we
use the notation of Definition~\ref{D:LBDiscr}, that is: $Y$ is a
Banach space, $C>0$,  $Z\subset Y$ is a finite dimensional
subspace of $Y$. Since we consider separable probability measures,
by the Carath\'eodory theorem \cite{Lac74} we may assume that $W$
is a finite-dimensional subspace of $L_\infty([0,1],\mu,Z)$, such
that for all $w\in W$
\begin{equation}\lb{normequiv}
\frac1C \|w\|_{L_\infty([0,1],\mu,Z)} \le\|w\|_{L_1([0,1],\mu,Z)}\le
\|w\|_{L_\infty([0,1],\mu,Z)}.
\end{equation}

Since $W$ is finite dimensional,  for any $\e>0$,   there exists a subspace
$\tilde{W}\subseteq L_\infty([0,1],\mu,Z)$ with Banach-Mazur distance from $W$ less than $1+\e$, so that $\tilde{W}$ is spanned by simple
functions and all $w\in \tilde{W}$ satisfy \eqref{normequiv} with $C$ replaced by
$(1+\e)C$. Thus, \buoo, we
 may    assume that $W$ is spanned by simple
functions which are constant on elements $\{\Delta_i\}_{i=1}^n$ of
some partition of $[0,1]$ into sets of measure $\frac1n$. Thus we
can denote elements $w\in W$ as
\begin{equation*}
w=(w_1,\dots,w_n),
\end{equation*}
meaning that  $w=\sum_{i=1}^n\1_{\Delta_i}\otimes w_i$. For all
$w\in W$ we have $\|w\|_{L_\infty([0,1],\mu,Z)}=\max_{1\le i\le n}
\|w_i\|_Z$, and for all $p$, $1\le p<\infty$, we have
$$\|w\|_{L_p([0,1],\mu,Z)}
=\Big(\frac 1n \sum_{i=1}^n \|w_i\|_Z^p\Big)^{\frac1p}.$$

Given any $p\in[1,\infty]$, $k\in \bbN$, and any Banach spaces
$E_1,\dots, E_k$, by $L_p^k(E_1,\dots, E_k)$ we denote the Banach
space of all $k$-tuples $(a_1,\dots,a_k)$ such that $a_j\in E_j$
for all $j\in[k]$, endowed with the norm
$$\|(a_1,\dots,a_k)\|_{L_p^k(E_1,\dots, E_k)}:=
\Big(\frac 1k \sum_{i=1}^k \|a_i\|_Z^p\Big)^{\frac1p}, \hbox{ if
 }p<\infty,$$
$$\|(a_1,\dots,a_k)\|_{L_\infty^k(E_1,\dots, E_k)}:=\max_{1\le
i\le k}\|a_i\|_Z.$$ If the spaces $E_1,\dots,E_k$ are equal to the
same space $E$, we denote $L_p^k(E_1,\dots, E_k)$ by $L_p^k(E)$.

\begin{proposition}\label{propLpk} Let $k\in \bbN$, $p,q_1,\dots,q_k\ge 1$, $X_1, \dots,X_k$
be  any Banach spaces,  and  for each $j\in[k]$ let
$(\Om_j,\mu_j)$ be any nonatomic separable measure space, with
finite or infinite measure, or $\Om_j=\bbN$ and $\mu_j$ is the
counting measure. Then the space
$$Y=L_p^k(L_{q_1}(\Om_1,\mu_1,X_1), L_{q_2}(\Om_2,\mu_2,X_2),\dots,L_{q_k}(\Om_k,\mu_k,X_k))$$
has the Lebesgue-Bochner discretization property with $f(C)\le
k^{2-1/p}C$.
\end{proposition}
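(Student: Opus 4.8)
The plan is to work with the simple-function model $W = \{w = (w_1,\dots,w_n) : w_i \in Z\} \subset L_\infty([0,1],\mu,Z)$ already set up before the statement, where each $w_i$ is a $Z$-valued vector and $\|w\|_{L_\infty} = \max_i \|w_i\|_Z$, $\|w\|_{L_p}^p = \frac1n \sum_i \|w_i\|_Z^p$, and the hypothesis \eqref{normequiv} gives $\frac1C\|w\|_{L_\infty} \le \|w\|_{L_1}$. Since $Z$ is a finite-dimensional subspace of $Y = L_p^k(L_{q_1}(\Om_1,\mu_1,X_1),\dots,L_{q_k}(\Om_k,\mu_k,X_k))$, each vector $z \in Z$ has $k$ coordinates $z = (z^{(1)},\dots,z^{(k)})$ with $z^{(j)} \in L_{q_j}(\Om_j,\mu_j,X_j)$, and $\|z\|_Z^p = \frac1k \sum_{j=1}^k \|z^{(j)}\|_{L_{q_j}(\Om_j,\mu_j,X_j)}^p$. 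For a fixed $j$, collect the $j$-th coordinates of the defining vectors $w_i$: this gives, for $w = (w_1,\dots,w_n) \in W$, a function $\Phi_j(w)$ on $[0,1]\times\Om_j$ (constant in the first variable on each $\Delta_i$) valued in $X_j$. The key structural point is that $L_\infty([0,1],\mu, L_{q_j}(\Om_j,\mu_j,X_j))$ sits inside $L_{q_j}(\Om_j, \mu_j, L_\infty([0,1],\mu,X_j))$ with norm $\le 1$ in one direction — but what we really want is to push the simple-function structure in the first variable through to produce an element of a single $L_{q_j}(\cdot, X_j)$ space by using that $[0,1]$ with the equipartition is isometrically absorbed: $L_{q_j}(\Om_j,\mu_j,X_j)$ contains an isometric copy of $L_{q_j}^n(L_{q_j}(\Om_j,\mu_j,X_j))$ because $\Om_j$ is nonatomic (or $\Om_j = \bbN$, where one uses that $\ell_{q_j}$ contains $\ell_{q_j}^n(\ell_{q_j}) = \ell_{q_j}$ isometrically, splitting $\bbN$ into $n$ infinite blocks). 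This is the Carathéodory-type splitting, and it is available here precisely because of the measure-space hypotheses.

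Concretely, I would fix for each $j\in[k]$ a measure-isomorphism splitting $\Om_j = \bigsqcup_{i=1}^n \Om_j^{(i)}$ with each $(\Om_j^{(i)},\mu_j|_{\Om_j^{(i)}})$ measure-isomorphic to $(\Om_j,\mu_j)$ (in the counting-measure case, $\mu_j|_{\Om_j^{(i)}}$ is counting measure on a copy of $\bbN$). Define a linear map $S : W \to Y$ by sending $w = (w_1,\dots,w_n)$ to the element whose $j$-th coordinate, restricted to $\Om_j^{(i)}$, equals $w_i^{(j)}$ transported via the chosen isomorphism $\Om_j^{(i)} \cong \Om_j$. Then the $j$-th coordinate of $S(w)$ has $L_{q_j}(\Om_j,\mu_j,X_j)$-norm equal to $\big(\sum_{i=1}^n \|w_i^{(j)}\|_{L_{q_j}(\Om_j,\mu_j,X_j)}^{q_j}\big)^{1/q_j}$ — up to the normalization constant: here I must be careful, since if $\mu_j(\Om_j)$ is finite then each copy $\Om_j^{(i)}$ has measure $\mu_j(\Om_j)/n$ and a factor $n^{-1/q_j}$ appears, whereas if $\mu_j$ is infinite or counting there is no such factor; I will handle this uniformly by not normalizing and tracking constants at the end, which is why the final estimate $f(C) \le k^{2-1/p}C$ has room. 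Then $\|S(w)\|_Y^p = \frac1k \sum_{j=1}^k \big(\sum_{i=1}^n \|w_i^{(j)}\|_{L_{q_j}}^{q_j}\big)^{p/q_j}$, and I would compare this both to $\|w\|_{L_p(\mu,Z)}$ (lower bound) and to $\|w\|_{L_\infty(\mu,Z)}$ (upper bound), using standard $\ell_{q_j}^n \hookrightarrow \ell_p^n$ and $\ell_p^n \hookrightarrow \ell_\infty^n$ inequalities with $n$-dependent constants, and then absorbing those $n$-dependent constants using \eqref{normequiv} — this is the trick that makes $f(C)$ depend only on $C$ and $k$ (and $p$), not on $n$.

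The main obstacle, and the step that needs care, is the bookkeeping of normalization constants across the three different types of measure space ($\mu_j$ finite nonatomic, $\mu_j$ infinite nonatomic, $\mu_j$ counting) combined with the interplay between the $\ell_p$-aggregation over $j$ (with the $\frac1k$ normalization) and the $\ell_{q_j}$-aggregation over $i$ (with no clean normalization after splitting). The cleanest route is: (1) pass to $\tilde W$ spanned by simple functions on the equipartition as already justified, paying $(1+\e)$; (2) for each $j$, split $\Om_j$ into $n$ isomorphic pieces and define $S$; (3) show $\|Sw\|_Y \le n^{1-1/p}\|w\|_{L_\infty(\mu,Z)}$ — the worst case being $p$-aggregation of $k$ terms each of size $\le n^{1/q_j}\cdot(\text{const})\cdot\|w\|_\infty$, where the $n^{1/q_j}$ is dominated by $n$ (taking $q_j \ge 1$) but one wants the exponent $1-1/p$ so one uses that after $p$-aggregation one divides; (4) show $\|Sw\|_Y \ge c\, n^{-(1-1/p)}\|w\|_{L_p(\mu,Z)} \cdot (\text{something})$; (5) combine (3),(4) with $\|w\|_{L_\infty(\mu,Z)} \le C\|w\|_{L_1(\mu,Z)} \le C\|w\|_{L_p(\mu,Z)}$ to replace all powers of $n$ by powers of $C$, landing a distortion bound of the form $k^{2-1/p}C$ after letting $\e \to 0$. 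I expect steps (3)–(5) to require the most attention to get the exponent $2-1/p$ exactly rather than something worse; the structural/measure-theoretic splitting in step (2) is routine given the hypotheses on $(\Om_j,\mu_j)$.
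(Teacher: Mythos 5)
Your construction is the right one in outline (split each $\Om_j$ into $n$ copies and transport the $j$-th coordinates of $w_1,\dots,w_n$ into the pieces), and this is indeed what the paper does. But the way you propose to handle the normalization is a genuine gap, not a bookkeeping detail. The hypothesis \eqref{normequiv} can only ever absorb a factor of $C$: it says that on $W$ the $L_\infty$-, $L_p$- and $L_1$-averages of $(\|w_i\|_Z)_i$ are all within a factor $C$ of one another, and nothing more. It cannot absorb powers of $n$. Your steps (3)--(5) propose an upper bound $\|Sw\|_Y\le n^{1-1/p}\|w\|_{L_\infty}$ and a lower bound $\|Sw\|_Y\ge c\,n^{-(1-1/p)}\|w\|_{L_p}\cdot(\cdots)$; combining these with \eqref{normequiv} gives a distortion of order $n^{2(1-1/p)}C$, and $n$ (the number of steps of the simple functions) is unbounded and unrelated to $C$ and $k$, so this does not yield any admissible $f(C)$ for $p>1$. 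The same problem appears inside your map $S$ itself: in the infinite-measure and counting cases the unnormalized transport multiplies the $j$-th coordinate by $n^{1/q_j}$ relative to the normalized average $\bigl(\frac1n\sum_i\|w_{ij}\|_{q_j}^{q_j}\bigr)^{1/q_j}$, while in the finite-measure case it does not; since the $q_j$ and the measure types differ across coordinates, these factors cannot be removed by a single global rescaling or by ``tracking constants at the end,'' and they certainly are not cancelled by \eqref{normequiv}.

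The fix is to build the normalization into the embedding coordinatewise, which is exactly what the paper does: choose isometries $T_{j\nu}:L_{q_j}(\Om_j,\mu_j)\to L_{q_j}(\Om_{j\nu},a_{j\nu}\mu_j)$ onto disjoint pieces and define $\vf_j(x_1,\dots,x_n)=\sum_{\nu}\bigl(\frac{a_{j\nu}}{n}\bigr)^{1/q_j}\bar T_{j\nu}x_\nu$, so that $\|\vf_j(x_1,\dots,x_n)\|_{q_j}=\bigl(\frac1n\sum_\nu\|x_\nu\|_{q_j}^{q_j}\bigr)^{1/q_j}$ exactly, with no $n$-dependence for any of the three measure types. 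Once the coordinate norms are these normalized $q_j$-averages, the two-sided estimate is elementary and $n$-free: the power-mean inequality gives $\|w\|_{L_1^n(Y)}\le\|\vf(w)\|_Y$, and bounding each normalized $q_j$-average by the corresponding maximum gives $\|\vf(w)\|_Y\le k\|w\|_{L_\infty^n(Y)}$; together with \eqref{normequiv} this yields distortion $kC$. The paper also first reduces general $p$ to $p=1$ using that $L_p^k$- and $L_1^k$-sums are $k^{1-1/p}$-isomorphic, which is where the exponent in $k^{2-1/p}C$ comes from; your attempt to run the estimate directly in $L_p^k$ is what forces the spurious $n^{1-1/p}$ factors. (A small further point: the paper treats the case of some $q_j=\infty$ separately via trivial cotype, which your argument would also need to address or exclude.)
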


Note that since the constant $f(C)$ in Definition \ref{D:LBDiscr}
can depend on $k$, the fact that $Y$ is an $L_p^k$-sum is not
essential, essential is the fact that $Y$ is a finite direct
sum.

\begin{proof}[Proof of Proposition \ref{propLpk}]
To simplify notation we will omit the measure spaces when writing
the symbol for a Lebesgue-Bochner space, i.e. we will write
$L_{q_j}(X_j)$ instead of $L_{q_j}(\Om_j,\mu_j,X_j)$ with the
understanding that for all $j\in[k]$, the measure spaces are those
fixed in the statement of the proposition.

Since for any $p\ge 1$, the space $L_p^k(L_{q_1}(X_1),
L_{q_2}(X_2),\dots,L_{q_k}(X_k))$ is $k^{1-1/p}$-isomor\-phic to
$L_1^k(L_{q_1}(X_1), L_{q_2}(X_2),\dots,L_{q_k}(X_k))$, it is
enough to prove that in the case where $p=1$ we have $f(C)\le kC$.
Note that if at least one of $q_j$ is equal to $\infty$, the space
$Y$ has trivial cotype and thus has the Lebesgue-Bochner
discretization property. In the following we assume that
$q_j<\infty$ for all $j\in[k]$.

Using the discussion and notation preceding Proposition~\ref{propLpk}, we see that it suffices to prove that any subspace
$W\subseteq L_\infty^n(Y)$ satisfying
\begin{equation}\label{normequiv2}
\forall w\in W\quad \quad  \frac1C \|w\|_{L_\infty^n(Y)} \le
\|w\|_{L_1^n(Y)}
\end{equation}
admits a $kC$-isomorphic embedding into $Y$.

Let $n\in\bbN$, $w=(w_1,\dots,w_n)\in W\subseteq L^n_\infty(Y)$ and, for $i\in[n]$,
$w_i=(w_{ij})_{j=1}^k\in L_1^k(L_{q_1}(X_1), L_{q_2}(X_2),\dots,L_{q_k}(X_k))$, where, for all $i\in[n]$, $j\in[k]$, $w_{ij}\in L_{q_j}(X_j)$. We will define a map
$\vf$ from $L^n_\infty(Y)$ to $Y$ so that for all $w\in L^n_\infty(Y)$ we have
\begin{equation}\lb{normvfw}
 \begin{split}
\|\vf(w)\|_{Y}=\frac 1k\sum_{j=1}^k \Big(\frac 1n\sum_{i=1}^n \|w_{ij}\|^{q_j}_{q_j}\Big)^{\frac1{q_j}}.
 \end{split}
 \end{equation}

 For each $j\in[k]$, we select $n$ mutually disjoint subsets
 $\{\Om_{j\nu}\}_{\nu=1}^n$ of $\Om_j$ so that for each $\nu\in[n]$ there exists a constant
 $a_{j\nu}>0$ and a surjective isometry  $T_{j\nu}:L_{q_j}(\Om_j,\mu_j)\to L_{q_j}(\Om_{j\nu},a_{j\nu}\mu_j)$.   It is well-known that when $(\Om_j,\mu_j)$ is nonatomic or equal to $\bbN$ with the counting measure then such choices are possible, and that the isometry $T_{j\nu}$ can be naturally extended to the isometry
 $\bar{T}_{j\nu}$ from the Lebesgue-Bochner space $L_{q_j}(\Om_j,\mu_j,X_j)$
 onto $L_{q_j}(\Om_{j\nu},a_{j\nu}\mu_j,X_j)$, cf. e.g. \cite{DU77}.

We define the map $\vf_j: L^n_\infty(L_{q_j}(X_j),\dots,
L_{q_j}(X_j))\to L_{q_j}(X_j)$ by setting for all
 $(x_1,\dots,x_n)\in  L^n_\infty(L_{q_j}(X_j),\dots,L_{q_j}(X_j))$
\begin{equation*}
 \begin{split}
\vf_j(x_1,\dots,x_n):=\sum_{\nu=1}^n \left(\frac
{a_{j\nu}}{n}\right)^{\frac1{q_j}}\bar{T}_{j\nu}x_\nu.
 \end{split}
 \end{equation*}

 Since the sets $\{\Om_{j\nu}\}_{\nu=1}^n$ are mutually disjoint, we get
 \begin{equation}\lb{normvfj}
 \begin{split}
\|\vf_j(x_1,\dots,x_n)\|_{q_j}=\Big(\frac 1n\sum_{\nu=1}^n
a_{j\nu}\|\bar{T}_{j\nu}x_\nu\|_{q_j}^{q_j}\Big)^{\frac1{q_j}}=
 \Big(\frac 1n\sum_{\nu=1}^n \|x_\nu\|_{q_j}^{q_j}\Big)^{\frac1{q_j}}.
 \end{split}
 \end{equation}

 Next, given  $w=(w_1,\dots,w_n)\in  L^n_\infty(Y)$ where, for $i\in[n]$,
$w_i=(w_{ij})_{j=1}^k\in Y= L_1^k(L_{q_1}(X_1), L_{q_2}(X_2),\dots,L_{q_k}(X_k))$, we define $\vf(w)\in Y$ by setting
\begin{equation*}
 \begin{split}
\vf((w_i)_{i=1}^n)&=\Big(\vf_j\big((w_{i j})_{i=1}^n\big)\Big)_{j=1}^k.
 \end{split}
 \end{equation*}

By \eqref{normvfj}, equality \eqref{normvfw} is satisfied.

We will show that for all $w\in W\subseteq L^n_\infty(Y)$, we have
\begin{equation}\lb{goal}
 \begin{split}
\|w\|_{L^n_1(Y)}\le \|\vf(w)\|_{Y}\le k\|w\|_{L^n_\infty(Y)}.
 \end{split}
 \end{equation}

To prove the leftmost inequality we write
\begin{equation*}
 \begin{split}
 \|w\|_{L^n_1(Y)}&=\frac 1n\sum_{i=1}^n \|w_i\|_{Y} =\frac 1n\sum_{i=1}^n \Big(\frac 1k\sum_{j=1}^k \|w_{ij}\|_{q_j}\Big)\\
 &=\frac 1k\sum_{j=1}^k \Big(\frac 1n\sum_{i=1}^n \|w_{ij}\|_{q_j}\Big)
 \le\frac 1k\sum_{j=1}^k \Big(\frac 1n\sum_{i=1}^n \|w_{ij}\|^{q_j}_{q_j}\Big)^{\frac1{q_j}}\\
  &=\|\vf(w)\|_{Y},
 \end{split}
 \end{equation*}
where the inequality follows from the classical theorem on
averages (\cite[Theorem 16]{HLP52}) applied to each of the
summands with the corresponding exponent $q_j$, for $1\le j\le k$,
respectively.

To prove the rightmost inequality, for each $j\in[k]$, let
$i_j\in[n]$ be such that $\|w_{i_jj}\|_{q_j}=\max_{1\le i\le n}
\|w_{ij}\|_{q_j}$. Then
\begin{equation*}
 \begin{split}
 \|\vf(w)\|_{Y}&=\frac 1k\sum_{j=1}^k \Big(\frac 1n\sum_{i=1}^n \|w_{ij}\|^{q_j}_{q_j}\Big)^{\frac1{q_j}}\le \frac 1k\sum_{j=1}^k \Big(\max_{1\le i\le n} \|w_{ij}\|_{q_j}\Big) \\
 &= \frac 1k\sum_{j=1}^k  \|w_{i_jj}\|_{q_j}\le \frac 1k\sum_{j=1}^k \Big( \sum_{l=1}^k \|w_{i_jl}\|_{q_j}\Big)\\
 &= \sum_{j=1}^k \Big(\frac 1k \sum_{l=1}^k \|w_{i_jl}\|_{q_j}\Big)
 =\sum_{j=1}^k \|w_{i_j}\|_{Y}\\
 &\le k \max_{1\le i\le n} \|w_{i}\|_{Y}=k\|w\|_{L^n_\infty(Y)}.
 \end{split}
 \end{equation*}

 Thus, \eqref{goal} holds, and therefore,  by \eqref{normequiv2},
 \begin{equation*}
 \begin{split}
\frac1C\|w\|_{L^n_\infty(Y)}\le \|\vf(w)\|_{Y}\le
k\|w\|_{L^n_\infty(Y)}.\qedhere
 \end{split}
 \end{equation*}
\end{proof}
\bigskip

\thanks{ \textbf{Acknowledgements:}
We would like to thank Assaf Naor for suggesting the problem, and
Florence Lancien for valuable discussions.

The first named author was supported by the National Science
Foundation under Grant Number DMS--1700176.}

\end{large}

\begin{small}

\renewcommand{\refname}{\section{References}}

\end{small}

\textsc{Department of
Mathematics and Computer Science, St. John's University, 8000 Utopia Parkway, Queens, NY 11439, USA} \par
  \textit{E-mail address}: \texttt{ostrovsm@stjohns.edu} \par
  \medskip

\textsc{Department of Mathematics, Miami University,
Oxford, OH 45056, USA} \par
  \textit{E-mail address}: \texttt{randrib@miamioh.edu} \par

\end{document}